\tikzset{
    block/.style = {draw, rectangle, 
        minimum height=1cm, 
        minimum width=2cm},
    input/.style = {coordinate,node distance=1cm},
    output/.style = {coordinate,node distance=4cm},
    arrow/.style={draw, -latex,node distance=2cm},
    pinstyle/.style = {pin edge={latex-, black,node distance=2cm}},
    sum/.style = {draw, circle, node distance=1cm}
}
\newtheorem{theorem}{Theorem}[section]
\newtheorem{lemma}[theorem]{Lemma}
\newtheorem{proposition}[theorem]{Proposition}
\newtheorem{definition}[theorem]{Definition}
 \newtheorem{remark}[theorem]{Remark}
\renewcommand{\phi}{\varphi}		
\renewcommand*{\@fnsymbol}[1]{\ifcase#1\or*\else\@arabic{\numexpr#1-1\relax}\fi}
\title
{On exact controllability of infinite-dimensional\\ linear port-Hamiltonian systems\thanks {
Support by Deutsche Forschungsgemeinschaft (Grant JA 735/13-1) 
is gratefully acknowledged.}}
\author{Birgit Jacob\thanks{University of Wuppertal, School of Mathematics and Natural Sciences,
Gau\ss stra\ss e 20,
D-42119 Wuppertal, Germany, $\{$bjacob,julia.kaiser$\}$@uni-wuppertal.de}
\and Julia T.~Kaiser\footnotemark[2]}
\date{}
\begin{document}
\maketitle
\thispagestyle{empty}
\pagestyle{empty}

\begin{abstract}
Infinite-dimensional linear port-Hamiltonian systems on a one-dimensional spatial domain with  full boundary 
control and without internal damping are studied. This
class of systems includes models of beams and waves as well as the transport equation and networks of nonhomogeneous 
transmission lines. The main result shows that well-posed port-Hamiltonian systems, with state space 
$L^2((0,1);\mathbb C^n)$ and input space $\mathbb C^n$, are exactly controllable. 
\end{abstract}
{\bf Keywords:} Controllability, $C_0$-semigroups, port-Hamiltonian 
differential equations, boundary control systems. \\

{\bf Mathematics Subject Classification:} 93C20, 93B05, 35L40, 93B52.

\section{Introduction}\label{intro}

In this article, we consider infinite-dimensional linear port-Hamiltonian systems on a one-dimensional 
spatial domain with boundary control of the form 
\begin{align}
\frac{\partial x}{\partial t}(\zeta,t) =& \left( P_1
  \frac{\partial}{\partial \zeta} + P_0\right) ({\cal H}({\zeta})
x(\zeta,t)),\nonumber \\
x(\zeta,0) =& x_0(\zeta),\label{eqn:pde}\\
u(t)=& \widetilde{W}_B \begin{bmatrix} ({\cal H}x)(1,t) \\ ({\cal
         H}x)(0,t)\end{bmatrix}, \nonumber
\end{align}
where $\,\, \zeta \in [0,1]$ and $t\ge 0$. Moreover, we assume that  $P_1$ is an invertible $n\times n$ Hermitian matrix, 
$P_0$ is a  $n\times n$  skew-adjoint matrix, $\widetilde{W}_B$ is a full row rank $n\times 2n$-matrix, 
and ${\cal H}(\zeta)$ is a positive $n\times n$ Hermitian matrix for a.e.~$\zeta\in (0,1)$  satisfying ${\cal H}, {\cal 
H}^{-1}\in L^{\infty}((0,1);\mathbb C^{n\times n})$. 
The matrix $P_1\mathcal{H}(\zeta)$ can be diagonalized as $P_1\mathcal{H}(\zeta)=S^{-1}(\zeta)\Delta(\zeta)S(\zeta)$, where 
$\Delta(\zeta)$ is a diagonal matrix and  $S(\zeta)$ is an invertible matrix  for a.e.~$\zeta\in (0,1)$. We suppose the 
technical assumption that  $S^{-1}$, $S$,  $\Delta: [0,1] \rightarrow \mathbb{C}^{n\times 
n}$ are continuously differentiable.

Equation \eqref{eqn:pde} describes a special class of port-Hamiltonian systems, which however is rich enough 
to cover in particular the wave equation, the transport equation and the Timoshenko beam 
equation, and also coupled beam and wave equations each with possibly damping on the boundary.
For more information on this class of port-Hamiltonian systems we refer to the monograph \cite{JacZwa12} and 
the survey \cite{JacZwa19}. However, we note that here we always assume that there is no internal damping (the matrix $P_0$ 
is skew-adjoint) and that we have full boundary control ($\widetilde{W}_B$ is a full row rank $n\times 2n$-matrix).

Port-based network modeling of complex physical systems leads to port-Hamiltonian systems. For 
finite-dimensional systems there is by now a well-established theory
\cite{vanDerSchaft06,EbMS07,DuinMacc09}. The port-Hamiltonian approach has been extended to the  
infinite-dimensional situation by a geometric differential 
approach \cite{VanDerSchaftMaschke_2002,MM05,JeSc09,ZwaGorMasVil10} and by a 
functional analytic approach \cite{Villegas_2007,ZwaGorMasVil10,JacZwa12,JacMorZwa15,Augner16,JacZwa19}. Here we follow the 
functional analytic point of view. This approach has been successfully used to derive simple verifiable conditions for 
well-posedness \cite{GorZwaMas05,Villegas_2007,ZwaGorMasVil10,JacZwa12,JacMorZwa15,JacKai}, stability 
\cite{JacZwa12,AugJac14} and stabilization \cite{RaZwGo,RaGoMaZw,AugJac14,SchZwa18} and robust regulation \cite{HuPa}. For 
example, the port-Hamiltonian system \eqref{eqn:pde} is well-posed, if 
$v^* P_1 v -w^* P_1 w\le 0$ for every $\left[\begin{smallmatrix} v\\w\end{smallmatrix}\right]\in \ker \widetilde{W}_B$.

Provided the port-Hamiltonian system \eqref{eqn:pde} is well-posed,  we aim  to characterize {\em exact controllability}. 
Exact controllability is a desirable property of a controlled partial differential equation and has been extensively 
studied, see for example \cite{Komo94,CurZwa95,TucsWeis09}. We call the port-Hamiltonian system {exactly controllable}, if 
every 
state of the system can be reached in finite time with a suitable control input. Triggiani \cite{Trig91} showed 
that 
exact controllability does not hold for many hyperbolic partial differential equations. However, in this paper  we prove, 
that the port-Hamiltonian system  \eqref{eqn:pde} is exactly
controllable whenever it is well-posed.

\section{Reminder on port-Hamiltonian systems}\label{section2}

We define
\begin{equation}\label{operatorA}
\frak Ax:= \left( P_1 \frac{d}{d\zeta} + P_0\right) ({\cal H}x), \qquad x\in {\cal D}(\frak A), 
\end{equation}
 on $X:=L^2((0,1);\mathbb{C}^n)$  with the domain
\begin{equation}
\label{domainA}
{\cal D}(\frak A) := \left\{ x\in X\mid  {\cal H}x\in H^{1}((0,1);\mathbb C^n) \right\}
\end{equation}
and $\frak B:{\cal D}(\frak A)\rightarrow \mathbb C^n$ by
\begin{equation}\label{operatorB}
\frak B x=\widetilde{W}_B \begin{bmatrix} ({\cal H}x)(1,t) \\ ({\cal
         H}x)(0,t)\end{bmatrix}.
\end{equation}
Here $H^{1}((0,1);\mathbb C^n)$ denotes the first order Sobolev space.
We call $\frak A$ the \emph{(maximal) port-Hamiltonian operator} and equip the state space  $X=L^2((0,1);\mathbb{C}^n)$ 
with 
the 
energy norm $\sqrt{\langle \cdot,{\cal H} \cdot\rangle}$, where $\langle\cdot,\cdot\rangle$ denotes the standard inner 
product on $L^2((0,1);\mathbb{C}^n)$. We note that the energy norm is equivalent to the standard norm on  
$L^2((0,1);\mathbb{C}^n)$.

Then the partial differential equation (\ref{eqn:pde})  can be written as a {\em boundary control system}
\begin{align*}
   \dot{x} (t) &= \frak A  x(t),\qquad   x(0)= x_0, \\
   u(t) &= \frak B x(t).
\end{align*}
The first important question  is whether the port-Hamiltonian system \eqref{eqn:pde} is {\em well-posed} in the sense that 
for every initial condition $x_0\in X$ and every $u\in L^2_{\rm loc}([0,\infty);\mathbb C^n)$ equation \eqref{eqn:pde} has 
a unique mild solution. 

In \cite{Villegas_2007,ZwaGorMasVil10,JacZwa12} it is shown that 
the port-Hamiltonian system \eqref{eqn:pde} is well-posed if and only if the operator $A:{\cal D}(A)\subset X\rightarrow 
X$, 
defined by
\begin{equation}\label{operatorA2}
Ax:= \left( P_1 \frac{d}{d\zeta} + P_0\right) ({\cal H}x), \qquad x\in {\cal D}(A), 
\end{equation}
with the domain
\begin{equation}
\label{domainA2}
{\cal D}(A) := \left\{ x\in {\cal D}(\frak A)\mid  \widetilde{W}_B\begin{bmatrix} (\mathcal{H} x)(1) \\ (\mathcal{H} 
x)(0)\end{bmatrix} = 0 
\right\}
\end{equation}
generates a strongly continuous semigroup on $X$. 
We recall, that  $A$ generates a contraction semigroup  on $X$ if and only if $A$ is dissipative on $X$, c.f. 
\cite{GorZwaMas05,JacZwa12,AugJac14}. Further, 
matrix conditions to guarantee generation of a contraction semigroup have been obtained  in 
\cite{GorZwaMas05,JacZwa12,AugJac14} and matrix conditions for the generation of strongly continuous semigroups can be 
found in \cite{JacMorZwa15}. 

For the proof of the main theorem feedback techniques are needed and therefore we investigate port-Hamiltonian 
systems with boundary control and observations. These are systems of the form
\begin{align}
\frac{\partial x}{\partial t}(\zeta,t) =& \left( P_1
  \frac{\partial}{\partial \zeta} + P_0\right) ({\cal H}({\zeta})
x(\zeta,t)),\nonumber \\
x(\zeta,0) =& x_0(\zeta),\label{eqn:pdeio}\\
u(t)=& \widetilde{W}_B\begin{bmatrix} ({\cal H}x)(1,t) \\ ({\cal
         H}x)(0,t)\end{bmatrix}\, \nonumber\\
         y(t)=& \widetilde{W}_C\begin{bmatrix} ({\cal H}x)(1,t) \\ ({\cal
         H}x)(0,t)\end{bmatrix}, \nonumber
\end{align}
where we restrict ourselves in this article to case where $P_1$, $P_0$, ${\cal H}$ and $\widetilde W_B$ satisfy the 
condition described in Section \ref{intro} and $\widetilde W_C$ is a full row rank $k\times 2n$ matrix, 
$k\in\{0,\cdots,n\}$, such that the matrix $\left[\begin{smallmatrix} \widetilde W_B\\\widetilde 
W_C\end{smallmatrix}\right]$ has full row rank.
We call system \eqref{eqn:pdeio} a {\em (boundary control and observation) port-Hamiltonian system}. The case $k=0$ refers 
to the case of a system without observation, that is, every definition or statement of the port-Hamiltonian system  
\eqref{eqn:pdeio} also applies to the port-Hamiltonian system  \eqref{eqn:pde}.

We define $\frak C:{\cal D}(\frak A)\rightarrow \mathbb C^k$ by
\begin{equation}\label{operatorC}
\frak C x=\widetilde{W}_C\begin{bmatrix} ({\cal H}x)(1,t) \\ ({\cal
         H}x)(0,t)\end{bmatrix}.
\end{equation}

Then we can write the port-Hamiltonian system \eqref{eqn:pdeio} in the following form 
\begin{align}
   \dot{x} (t) &= \frak A  x(t),\qquad   x(0)= x_0, \nonumber\\
   u(t) &= \frak B x(t),\label{eqn:bcs}\\
	y(t)&=\frak C x(t).\nonumber
\end{align}
If the operator $A$, defined by \eqref{operatorA2}-\eqref{domainA2}, generates a strongly continuous semigroup on the state 
space $X$, then \eqref{eqn:bcs} defines a {\em boundary control and observation system}, see \cite[Theorem 11.3.2 and 
Theorem 11.3.5]{JacZwa12}.

\begin{definition}
 Let $ {\mathfrak A}: {\cal D}({\mathfrak A})\subset X\rightarrow X$,  ${\mathfrak B}:{\cal D}({\mathfrak A})\rightarrow 
\mathbb C^n$ and ${\mathfrak C}:{\cal D}({\mathfrak A})\rightarrow \mathbb C^k$ be linear  operators.
 Then $({\mathfrak A}, {\mathfrak B},{\mathfrak C})$ is a {\em boundary
control and observation system} if the following hold:\vspace{-0.8ex}
\begin{enumerate}
\item
  The operator $A:{\cal D}(A)\subset X \rightarrow X$ with ${\cal D}(A)={\cal D}({\mathfrak A} ) \cap 
\ker({\mathfrak B})$
  and
  $ Ax = {\mathfrak A}  x$ for $x\in {\cal D}(A)$
  is the infinitesimal generator of a strongly continuous semigroup  on $X$.
\item 
  There exists a right inverse $\widetilde B \in {\cal L}(\mathbb C^n,X)$ of ${\mathfrak B}$ in the sense  that for all $u 
\in
  \mathbb C^n$ we have $\widetilde Bu \in {\cal D}({\mathfrak A} )$,  $ {\mathfrak B} \widetilde B u = u$ and ${\mathfrak A} 
\widetilde B:\mathbb C^n\rightarrow X$ is   bounded.
  \item The operator ${\mathfrak C}$ is bounded from ${\cal D}(A)$ to $\mathbb C^k$, where ${\cal D}(A)$ is equipped with 
the graph norm of $A$.
  \end{enumerate}
\end{definition}

We recall, that if $A$, defined by \eqref{operatorA2}-\eqref{domainA2}, generates a strongly continuous semigroup on the 
state space $X$, then the port-Hamiltonian system \eqref{eqn:pdeio} is a boundary control and observation system. 

We note that for  $x_0\in {\cal D}({\mathfrak A})$ and $u\in C^2([0,\tau];\mathbb C^n)$, $\tau>0$, satisfying ${\mathfrak 
B}x_0=u(0)$, a boundary control and observation system $({\mathfrak A}, {\mathfrak B},{\mathfrak C})$ possesses a {\em 
unique classical solution} \cite[Lemma 13.1.5]{JacZwa12}.

For technical reasons we formulate the boundary conditions  equivalently via the boundary flow and the boundary effort.
As the matrix $ \left[\begin{smallmatrix} P_1 & -P_1\\ I & I\end{smallmatrix}\right]$ is invertible, we can write the 
port-Hamiltonian system \eqref{eqn:pdeio}
equivalently as
\begin{align}
\frac{\partial x}{\partial t}(\zeta,t) =& \left( P_1
  \frac{\partial}{\partial \zeta} + P_0\right) ({\cal H}({\zeta})
x(\zeta,t)),\nonumber \\
x(\zeta,0) =& x_0(\zeta),\label{eqn:pdeio2}\\
u(t)=& {W}_B\begin{bmatrix}
  f_{\delta, \mathcal{H} x}\\
  e_{\delta,\mathcal{H} x}
 \end{bmatrix}, \nonumber\\
         y(t)=& {W}_C\begin{bmatrix}
  f_{\delta, \mathcal{H} x}\\
  e_{\delta,\mathcal{H} x}
 \end{bmatrix}, \nonumber
\end{align}
where 
\begin{align*}
 \begin{bmatrix}
  f_{\delta, \mathcal{H} x}\\
  e_{\delta,\mathcal{H} x}
 \end{bmatrix}= \frac{1}{\sqrt{2}} \begin{bmatrix} P_1 & -P_1\\ I & I\end{bmatrix}\begin{bmatrix}(\mathcal{H} x)(1)\\ 
(\mathcal{H} x)(0) 
\end{bmatrix}
\end{align*}
and 
\begin{align}\label{operatorB2}
\widetilde W_B  =W_B \frac{1}{\sqrt{2}} \left[\begin{smallmatrix} P_1 & -P_1\\ I & I\end{smallmatrix}\right],\,
\widetilde W_C = W_C \frac{1}{\sqrt{2}}  \left[\begin{smallmatrix} P_1 & -P_1\\ I & I\end{smallmatrix}\right].
\end{align}
Here $f_{\delta, \mathcal{H} x}$ is called the  \emph{boundary flow}  and $e_{\delta,\mathcal{H} x}$ the 
\emph{boundary 
effort}.
The  port-Hamiltonian system \eqref{eqn:pdeio} is uniquely described by the tuple 
$(\frak A, \frak B, \frak C)$ given by \eqref{operatorA}, \eqref{domainA}, \eqref{operatorB} and \eqref{operatorC}.

Well-posedness is a fundamental property of boundary control and observation systems.

\begin{definition}
 We call a boundary control and observation system $({\mathfrak A}, {\mathfrak B},{\mathfrak C})$
  {\em well-posed} if 
  there exist a
  $\tau >0$ and $m_\tau\ge 0$ such that for all $x_0 \in {\cal D}({\mathfrak A})$ and $u \in C^2([0,\tau];\mathbb{C}^n)$ 
with $u(0)= {\mathfrak B} x_0$ the classical solution $x$, $y$ satisfy 
   \begin{align*}
    \|x(\tau)\|_{X}^2 &+ \int_0^{\tau} \|y(t)\|^2 dt \\\leq&
    m_\tau \left( \|x_0\|_{X}^2 + \int_0^{\tau} \|u(t)\|^2 dt  \right).
    \end{align*}
\end{definition}
\vspace{1ex}
There exists a rich literature on well-posed systems, see e.g.~Staffans \cite{Staf05} and Tuscnak and Weiss 
\cite{TucsWeis14}. 
In general, it is not easy to show that a boundary control and observation system is well-posed. However, for the   
port-Hamiltonian system \eqref{eqn:pdeio}
well-posedness is already satisfied if $A$ generates a strongly continuous semigroup.

\begin{theorem}\cite[Theorem 13.2.2]{JacZwa12}\label{wellposed}
The  port-Hamiltonian system \eqref{eqn:pdeio} is well-posed if and only if the operator $A$ defined by 
\eqref{operatorA2}-\eqref{domainA2} generates a strongly continuous semigroup on $X$.
\end{theorem}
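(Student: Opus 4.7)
The plan is to prove both directions by reducing everything to standard estimates for first-order hyperbolic systems after diagonalising $P_1\mathcal{H}$.

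\emph{Easy direction.} If the system is well-posed, then specialising the defining inequality to $u\equiv 0$ and $x_0\in\mathcal{D}(\mathfrak{A})\cap\ker\mathfrak{B}=\mathcal{D}(A)$ gives $\|x(\tau)\|_X\le\sqrt{m_\tau}\,\|x_0\|_X$. Classical solutions exist on $\mathcal{D}(A)$, so together with the density of $\mathcal{D}(A)$ in $X$ this contraction on a dense subspace lifts to a strongly continuous semigroup whose generator coincides with $A$, the identification being forced by the fact that the classical solutions satisfy $\dot x=\mathfrak{A}x$ and remain in $\ker\mathfrak{B}$.

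\emph{Hard direction.} Assume $A$ generates a $C_0$-semigroup. I would first trivialise the principal part: using $P_1\mathcal{H}(\zeta)=S^{-1}(\zeta)\Delta(\zeta)S(\zeta)$ with $S,S^{-1},\Delta\in C^1([0,1];\mathbb{C}^{n\times n})$, the substitution $z(\zeta,t):=S(\zeta)(\mathcal{H}x)(\zeta,t)$ transforms \eqref{eqn:pdeio} into
\begin{equation*}
\partial_t z(\zeta,t)=\Delta(\zeta)\,\partial_\zeta z(\zeta,t)+R(\zeta)\,z(\zeta,t),
\end{equation*}
where $R\in L^\infty$ absorbs $P_0$ and the derivative of $S$. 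The invertibility of $P_1\mathcal{H}$ lets me split $\Delta=\mathrm{diag}(\Lambda^+,-\Lambda^-)$ with $\Lambda^\pm$ strictly positive, so in $z=(z_+,z_-)^\top$ the component $z_+$ propagates rightwards and $z_-$ leftwards at speeds bounded away from $0$.

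Next I would run the energy/characteristics estimate. Differentiating $t\mapsto\int_0^1|z(\zeta,t)|^2d\zeta$, integrating by parts using the diagonal form of $\Delta$, and applying Gronwall to absorb the $R$-term, one controls $\|z(\tau,\cdot)\|_X^2$ and the outgoing trace norm $\int_0^\tau(|z_+(1,t)|^2+|z_-(0,t)|^2)dt$ by $\|z(0,\cdot)\|_X^2$ and the incoming trace norm $\int_0^\tau(|z_+(0,t)|^2+|z_-(1,t)|^2)dt$. Since $\widetilde W_C$ acts boundedly on the $2n$-dimensional vector of boundary traces, controlling those traces in $L^2(0,\tau)$ controls $y$ in $L^2(0,\tau;\mathbb{C}^k)$.

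The main obstacle is the algebraic step linking the full-row-rank matrix $W_B$ to this incoming/outgoing decomposition: one must show that the relation $W_B(f_{\delta,\mathcal{H}x},e_{\delta,\mathcal{H}x})^\top=u$, transcribed into the $z$-variables, can be solved for the $n$-dimensional incoming trace in terms of $u$ and the $n$-dimensional outgoing trace with bounded coefficients. This is precisely where the generation hypothesis on $A$ is needed: it rules out the characteristic case in which $W_B$ would fail to separate the incoming and outgoing data, and ensures that the solvability matrix is uniformly invertible. Granting it, substitution into the Gronwall bound yields $\|x(\tau)\|_X^2+\int_0^\tau\|y(t)\|^2dt\le m_\tau(\|x_0\|_X^2+\int_0^\tau\|u(t)\|^2dt)$ on the class $x_0\in\mathcal{D}(\mathfrak{A})$, $u\in C^2([0,\tau];\mathbb{C}^n)$ with $u(0)=\mathfrak{B}x_0$, which is the definition of well-posedness.
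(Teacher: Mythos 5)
The paper offers no proof of this statement at all: it is quoted verbatim from \cite[Theorem 13.2.2]{JacZwa12}, and the standard proof there (going back to Zwart, Le Gorrec, Maschke and Villegas, and to \cite{JacMorZwa15}) follows exactly the route you sketch: diagonalize $P_1\mathcal{H}=S^{-1}\Delta S$, pass to the wave variables $z=S\mathcal{H}x$, split into incoming and outgoing components according to the sign of $\Delta$, prove well-posedness for the ``toy'' system whose input is the incoming trace and whose output is the outgoing trace, and then rewrite the actual boundary condition $W_B\bigl[\begin{smallmatrix}f_{\delta,\mathcal{H}x}\\ e_{\delta,\mathcal{H}x}\end{smallmatrix}\bigr]=u$ in these variables. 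So your strategy is the right one, and the easy direction and the energy/Gronwall estimate are acceptable as a sketch (modulo the fact that the zero-order coupling $R$ mixes $z_+$ and $z_-$, so the trace estimate is usually obtained by a perturbation or small-time iteration argument rather than a single integration by parts; this is routine).

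The genuine gap is the step you explicitly wave through with ``Granting it'': the claim that generation of a $C_0$-semigroup by $A$ forces the $n\times n$ block of the transformed boundary matrix acting on the incoming traces to be invertible, so that the incoming data can be expressed boundedly in terms of $u$ and the outgoing data. This is not a side remark --- it is the mathematical heart of the hard direction and is precisely the content of the generation characterization in \cite{JacMorZwa15} (and of the corresponding argument in \cite{ZwaGorMasVil10,JacZwa12}). Proving it requires an actual argument: one must show that if that block is singular, then $A$ fails to generate (e.g.\ by constructing, via the characteristics of the decoupled system, a family of (approximate) eigenfunctions or solutions violating uniqueness or the Hille--Yosida resolvent bounds), and conversely that invertibility of the block yields generation and then, combined with your trace estimate, the well-posedness inequality. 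As written, your proposal assumes exactly the implication that has to be established, so the hard direction is not proved. A secondary, more cosmetic point: in the easy direction you should note that within the paper's definitions the existence of classical solutions for compatible data is itself tied to the boundary control and observation system structure, so one has to argue from the mild-solution formulation (uniqueness and continuous dependence for $u\equiv 0$) rather than simply ``lifting a contraction from a dense subspace''; also well-posedness only gives a bound at the single time $\tau$, and one must iterate it to get the semigroup property and strong continuity on $[0,\infty)$.
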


There is a special class of  port-Hamiltonian systems for which well-posedness follows immediately. 

\begin{definition}
A    port-Hamiltonian systems \eqref{eqn:pdeio} is called 
{\em impedance passive}, if 
\begin{equation}\mathrm{Re\,}\langle\frak A x,x\rangle \leq \mathrm{Re\,}\langle\frak B x,\frak C 
x\rangle\label{ip}\end{equation} 
 for every $x\in {\cal D}(\frak A)$.
If we have equality in \eqref{ip},
then the port-Hamiltonian system is called {\em  impedance energy preserving}.
\end{definition}

The fact that a  port-Hamiltonian system is impedance energy preserving can be characterized by a easy 
checkable matrix condition.

\begin{theorem}\cite[Theorem 4.4]{GorZwaMas05}\label{iep}
The  port-Hamiltonian systems \eqref{eqn:pdeio} is {impedance energy preserving} if and only if
it 
holds 
\begin{equation}
\begin{bmatrix}
{W}_B\Sigma {W}_B^* &{W}_B\Sigma{W}_C^* \\
{W}_C\Sigma {W}_B^* & {W}_C\Sigma {W}_C^* 
\end{bmatrix}=
\begin{bmatrix}
0 & I \\ I & 0
\end{bmatrix},
\end{equation}
where $\Sigma =\left[\begin{smallmatrix} 0&I\\I&0\end{smallmatrix}\right]$.
\end{theorem}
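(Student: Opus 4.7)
My plan is to reduce both sides of the impedance energy preserving identity \eqref{ip} to Hermitian quadratic forms in the boundary vector $\xi := \begin{bmatrix} f_{\delta,\mathcal{H}x} \\ e_{\delta,\mathcal{H}x}\end{bmatrix}\in\mathbb{C}^{2n}$, and then translate the resulting equality of quadratic forms into a matrix identity. Writing $e:=\mathcal{H}x$, the skew-adjointness $P_0^*=-P_0$ makes $\Re\langle P_0e(\zeta),e(\zeta)\rangle$ vanish pointwise, while $P_1=P_1^*$ together with integration by parts gives the boundary identity
\begin{equation*}
2\Re\langle\frak Ax,x\rangle_X = e(1)^* P_1 e(1) - e(0)^* P_1 e(0).
\end{equation*}
A short direct computation from the definition of the boundary flow and effort and the invertible change of variables in \eqref{operatorB2} rewrites this as $f_\delta^* e_\delta + e_\delta^* f_\delta = \xi^* \Sigma \xi$.

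For the right-hand side of \eqref{ip}, from $\frak Bx=W_B\xi$ and $\frak Cx=W_C\xi$ I obtain
\begin{equation*}
2\Re\langle\frak Bx,\frak Cx\rangle = \xi^*\bigl(W_B^*W_C + W_C^*W_B\bigr)\xi = \xi^* W^*\Sigma W\xi,
\end{equation*}
with $W := \begin{bmatrix} W_B\\W_C\end{bmatrix}$. An essential observation is that the inner product $\langle\frak Bx,\frak Cx\rangle$ is only meaningful when $k = n$, so $W$ is a square $2n\times 2n$ matrix of full row rank and therefore invertible. Since $\mathcal{H}x\in H^{1}((0,1);\mathbb{C}^n)$ may be prescribed with arbitrary boundary values in $\mathbb{C}^n\times\mathbb{C}^n$ and the transformation $(e(1),e(0))\mapsto\xi$ is invertible, $\xi$ ranges over all of $\mathbb{C}^{2n}$ as $x$ varies in $\mathcal{D}(\frak A)$. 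Consequently impedance energy preservation is equivalent to the equality of Hermitian quadratic forms $\xi^*\Sigma\xi = \xi^* W^*\Sigma W\xi$ on $\mathbb{C}^{2n}$, which in turn is equivalent to the matrix identity $W^*\Sigma W = \Sigma$.

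The last step is to convert $W^*\Sigma W = \Sigma$ into the form stated in the theorem. Multiplying on the left by $W^{-*}$ and on the right by $W^{-1}$ gives $\Sigma = W^{-*}\Sigma W^{-1}$; inverting both sides and using $\Sigma^{-1}=\Sigma$ yields $W\Sigma W^* = \Sigma$, whose block decomposition is exactly the stated equation. I expect the only mildly nontrivial book-keeping to be the computation identifying the boundary term $e(1)^* P_1 e(1) - e(0)^* P_1 e(0)$ with $\xi^*\Sigma\xi$, where the normalization $1/\sqrt{2}$ in \eqref{operatorB2} and the specific block form of $\Sigma$ need to combine correctly; everything else reduces to linear algebra using invertibility of $W$ and $\Sigma^{2}=I$.
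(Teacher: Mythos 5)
Your proof is correct, and it is essentially the standard argument for this result: the paper itself gives no proof but quotes it from \cite{GorZwaMas05}, whose proof proceeds exactly as you do, namely reducing $\Re\langle \frak A x,x\rangle$ to the boundary term $\frac12\left(e(1)^*P_1e(1)-e(0)^*P_1e(0)\right)=\frac12\xi^*\Sigma\xi$ via $P_1=P_1^*$, $P_0^*=-P_0$, using surjectivity of the boundary trace on ${\cal D}(\frak A)$, and converting the equality of Hermitian forms into $W\Sigma W^*=\Sigma$. Your two auxiliary observations are also sound: impedance energy preservation forces $k=n$, and the invertibility of $W=\left[\begin{smallmatrix}W_B\\ W_C\end{smallmatrix}\right]$ needed to pass from $W^*\Sigma W=\Sigma$ to $W\Sigma W^*=\Sigma$ follows both from the standing full-row-rank assumption and, in fact, from the identity $W^*\Sigma W=\Sigma$ itself.
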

\vspace{1ex}

\begin{remark}\label{rem}
 Every impedance energy preserving port-Hamiltonian system \eqref{eqn:pdeio} is well-posed; ${W}_B\Sigma {W}_B^* =0$ even
implies that $A$ generates a unitary strongly continuous group, c.f. \cite[Theorem 1.1]{JacMorZwa15}.
\end{remark}

In order to formulate the {\em mild solution} of a well-posed port-Hamiltonian system  \eqref{eqn:pdeio} we need to 
introduce some notation.
Let $X_{-1}$ be the completion of $X$ with respect to the norm 
$ \|x\|_{X_{-1}}= \|(\beta I -A)^{-1}x\|_X$ 
for some $\beta $ in the resolvent set $\rho(A)$ of $A$, this implies,
\[ 
  X\subset  X_{-1}
\]
and $X$ is continuously embedded and dense in $X_{-1}$. Furthermore, let $(T(t))_{t\ge 0}$ be the strongly continuous 
semigroup generated by $A$. The semigroup  $(T(t))_{t\ge 0}$ extends uniquely to a strongly continuous semigroup 
$(T_{-1}(t))_{t\ge 0}$ on $X_{-1}$ whose generator $A_{-1}$, with domain equal to $X$, is an extension of $A$, see e.g.\ 
\cite{EngNag00}. Moreover, we can identify $X_{-1}$ with the dual space of ${\cal D}(A^*)$ with respect to the pivot space 
$X$, see \cite{TucsWeis09}, that is $X_{-1}={\cal D}(A^*)'$.
If the  port-Hamiltonian system  \eqref{eqn:pdeio} is well-posed, then the unique mild solution is given by
\begin{equation*}
x(t)=T(t)x_0+ \int_0^t T_{-1}(t-s)(\frak A \widetilde B-A_{-1}\widetilde B) u(s)\, ds.
\end{equation*}
 Here the operator $\widetilde B:\mathbb C^n\rightarrow  L^2((0,1);{\mathbb C}^n)$ can be defined as follows
\[ (\widetilde Bu)(\zeta):= ({\mathcal H}(\zeta))^{-1}\left( S_{1}{\zeta}+S_{2}(1-\zeta)\right)u,\]
where $S_{1}$ and $S_{2}$ are $n\times n$-matrices given by
\[ \begin{bmatrix} S_{1} \\  S_{2} \end{bmatrix}:=  \begin{bmatrix} P_1& -P_1\\I&I\end{bmatrix}^{-1}\widetilde 
W_B^*(\widetilde W_B\widetilde W_B^*)^{-1}. \]

For a well-posed  port-Hamiltonian system \eqref{eqn:pdeio} the {\em transfer function} is given by  \cite[Theorem 
12.1.3]{JacZwa12}
\begin{equation*}
 G(s)=\frak C(sI-A)^{-1}(\frak A\widetilde B-s\widetilde B)+\frak {C}\widetilde B, \qquad s\in \rho(A),
 \end{equation*}
 where $\rho(A)$ denotes the resolvent set of $A$. The transfer function is bounded on some right half plane and equals the 
Laplace transform of the mapping $u(\cdot)\mapsto y(\cdot)$ if $x_0=0$.

\begin{definition}\cite[Definition 13.1.11]{JacZwa12}
A well-posed port-Hamiltonian system \eqref{eqn:pdeio} with transfer function $G$ is called \emph{regular} if 
$\lim_{s\in\mathbb{R},s\to\infty} G(s)$ 
exists. In this case the
 \emph{feedthrough operator} $D$ is defined as
\begin{align*}D:= \lim_{s\in\mathbb{R},s\to\infty} G(s).
\end{align*}
\end{definition}
\vspace{1ex}

\begin{lemma}\cite[Lemma 13.2.22]{JacZwa12}\label{lem:regular}
Under the standing assumptions every well-posed port-Hamiltonian system \eqref{eqn:pdeio} 
is regular.
\end{lemma}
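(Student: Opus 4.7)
The plan is to reduce the regularity question to the asymptotics of an ODE boundary value problem and then to resolve this BVP for large real $s$. From the explicit formula for the transfer function, setting
$$
z := (sI-A)^{-1}(\mathfrak{A} \widetilde B - s\widetilde B) u_0 + \widetilde B u_0,
$$
one checks directly that $z \in \mathcal{D}(\mathfrak{A})$ satisfies $\mathfrak{A} z = sz$, $\mathfrak{B} z = u_0$, and $G(s) u_0 = \mathfrak{C} z$. Hence regularity is equivalent to showing that this $u_0$-parametrised BVP produces a solution whose image under $\mathfrak{C}$ admits a finite limit as $s \to +\infty$ through $\mathbb{R}$.

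Using $P_1 \mathcal{H} = S^{-1} \Delta S$ and the substitution $\tilde z = S z$, the equation $\mathfrak{A} z = sz$ is rewritten as
$$
\tilde z'(\zeta) = s\,\Delta(\zeta)^{-1} \tilde z(\zeta) + G(\zeta) \tilde z(\zeta),
$$
with $G \in C([0,1]; \mathbb{C}^{n \times n})$ absorbing the contributions of $P_0$, $\mathcal{H}'$ and $S'$; here the standing $C^1$-hypothesis on $S, S^{-1}, \Delta$ is crucial. The boundary operators $\mathfrak{B}, \mathfrak{C}$ become $s$-independent linear maps on $(\tilde z(0), \tilde z(1))$. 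Since $P_1 \mathcal{H}$ is similar to the Hermitian matrix $\mathcal{H}^{1/2} P_1 \mathcal{H}^{1/2}$, the eigenvalues $\lambda_i$ are real; partition $\{1,\dots,n\} = I^+ \cup I^-$ according to their sign and set $\Theta_i(\zeta) := \int_0^\zeta \lambda_i(\eta)^{-1} d\eta$, which is strictly monotone with the sign of $\lambda_i$.

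Parametrise the solution by $\alpha \in \mathbb{C}^n$ with $\alpha_i := \tilde z_i(1)$ for $i \in I^+$ and $\alpha_i := \tilde z_i(0)$ for $i \in I^-$. At the unperturbed level the opposite-endpoint entries satisfy $\tilde z_i(0) = e^{-s \Theta_i(1)} \alpha_i$ for $i \in I^+$ and $\tilde z_i(1) = e^{s \Theta_i(1)} \alpha_i$ for $i \in I^-$, both of which decay exponentially as $s \to +\infty$. A Liouville--Green / variation-of-constants argument on the perturbed system, together with the uniform bounds furnished by the continuous differentiability of $\Delta, S, S^{-1}$, shows that the perturbation $G(\zeta)\tilde z$ preserves this convergence: the opposite-endpoint contributions still tend to $0$ uniformly in $\|\alpha\| \le 1$. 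Consequently, the boundary condition $\mathfrak{B} z = u_0$ takes the form $M(s) \alpha = u_0$, where $M(s) \to M_\infty$ and $M_\infty$ is the $n\times n$ matrix whose $i$-th column is the $i$-th column of $\widetilde W_B\,\mathrm{diag}(\mathcal{H}(1) S(1)^{-1}, \mathcal{H}(0) S(0)^{-1})$ corresponding to $(\tilde z(1))_i$ when $i \in I^+$ and to $(\tilde z(0))_i$ when $i \in I^-$.

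The crucial point is that invertibility of $M_\infty$ is exactly the matrix condition characterising generation of a strongly continuous semigroup by $A$, in the form given in \cite{JacMorZwa15}. Well-posedness, via Theorem~\ref{wellposed}, therefore forces $M_\infty$ to be invertible, so $\alpha = M(s)^{-1} u_0 \to M_\infty^{-1} u_0$; the analogous limit applied to $\mathfrak{C} z$ produces $G(s) u_0 \to D u_0$ for a well-defined $n \times n$ matrix $D$, proving regularity. The principal obstacle is the asymptotic decoupling step: one must carefully justify that the lower-order perturbation $G(\zeta)\tilde z$ does not spoil the exponential decay of the opposite-endpoint components, and then identify the resulting limit matrix $M_\infty$ precisely with the matrix appearing in the semigroup generation criterion — this identification is what allows well-posedness to be converted into regularity.
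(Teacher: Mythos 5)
The paper offers no proof of this lemma itself (it is quoted from \cite{JacZwa12}, Lemma 13.2.22), and your argument follows essentially the same route as that cited proof: diagonalize $P_1\mathcal{H}=S^{-1}\Delta S$ (note $\mathcal{H}=P_1^{-1}S^{-1}\Delta S$ is indeed $C^1$, so absorbing $\mathcal{H}'$ into $G$ is legitimate), analyse the boundary value problem $\mathfrak{A}z=sz$, $\mathfrak{B}z=u_0$ for large real $s$, show the incoming (opposite-endpoint) components decay, and identify the limiting boundary matrix with the semigroup-generation condition of \cite{JacMorZwa15}, so well-posedness gives invertibility and hence convergence of $G(s)$. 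Your outline is correct and consistent with the paper's wave-equation example; the only part left at sketch level is the quantitative variation-of-constants estimate that the coupling term $G(\zeta)\tilde z$ contributes $o(1)$ uniformly as $s\to\infty$, and as a minor point the feedthrough $D$ is $k\times n$ rather than $n\times n$ when an output is present.
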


So far, we have only considered {\em open-loop system}, that is, the input $u(t)$ is 
independent of the output $y(t)$, see Figure \ref{figopen}. Systems, where input and output are connected via a feedback 
law 
\begin{equation}\label{law}
 u(t)=Fy(t)+v(t),
\end{equation}
are called {\em closed-loop systems}, see Figure \ref{figclosed}.  Here $F$ denotes the so called {\em feedback operator} 
and $v(t)$ the new input.

\tikzstyle{block} = [draw,  rectangle, 
    minimum height=3em, minimum width=6em]
    \tikzstyle{blocksmall} = [draw,  rectangle, 
    minimum height=1.5em, minimum width=3em]
\tikzstyle{sum} = [draw,  circle, node distance=1cm]
\tikzstyle{input} = [coordinate]
\tikzstyle{output} = [coordinate]
\tikzstyle{pinstyle} = [pin edge={to-,thin,black}]

\begin{figure}[ht]
    \begin{center}
\begin{tikzpicture}[auto, node distance=2.2cm,>=latex']
    \node [input, name=input] {};
    \node [block, right of=input] (system) {$(\frak A, \frak B, \frak C)$};
        \node [output, right of=system] (output) {};
 		\draw [->] (input) -- node {$u$} (system);
				\draw [->] (system) -- node {$y$} (output);
\end{tikzpicture}
\end{center}
\captionof{figure}{open-loop system $(\frak A, \frak B, \frak C)$}\label{figopen}
\label{ols}
\end{figure}

\begin{figure}[ht]
    \begin{center}
\begin{tikzpicture}[auto, node distance=1.8cm,>=latex']
    \node [input, name=input] {};
    \node [sum, right of=input] (sum) {};
    \node [block, right of=sum] (system) {$(\frak A, \frak B, \frak C)$};
       \node [output, right of=system] (output) {};
    \node [blocksmall, below of=system] (feedback) {$F$};
     \draw [->] (input) -- node {$v$} (sum);
		\draw [->] (sum) -- node {$u$} (system);
				\draw [-] (system) -- node {$y$} (output);
					    \draw [->] (output) |- node[pos=0.99] {} 
        node [near end] {} (feedback);
    \draw [->] (feedback) -| node[pos=0.99] {$+$} 
        node [near end] {} (sum);
\end{tikzpicture}
\end{center}
\captionof{figure}{closed-loop system $(\frak A, \frak B, \frak C)$ with feedback $F$}\label{figclosed}
\label{cls}
\end{figure}

\begin{definition}(\cite[Theorem 13.2.2]{JacZwa12} and \cite[Proposition 4.9]{We94})
\eqref{eqn:pdeio} and we denote by $D$ the corresponding feedthrough.
 A $n\times n$-matrix $F$ is called an {\em admissible 
feedback operator} for a regular  port-Hamiltonian system  \eqref{eqn:pdeio} with feedthrough operator $D$, if   $I-DF$ is 
invertible.
\end{definition}

\begin{proposition}\cite[Theorem 13.1.12]{JacZwa12}
Let $(\frak A, \frak B, \frak C)$ be a well-posed  port-Hamiltonian system  \eqref{eqn:pdeio}. Assume that
$F$ is an admissible feedback operator. Then the closed-loop system $(\frak A,  (\frak B-F\frak C), \frak C)$, i.e.,
\begin{align}
\frac{\partial x}{\partial t}(\zeta,t) =& \left( P_1
  \frac{\partial}{\partial \zeta} + P_0\right) ({\cal H}({\zeta})
x(\zeta,t)),\nonumber \\
x(\zeta,0) =& x_0(\zeta),\label{eqn:pdefb}\\
v(t)=& (\frak B-F\frak C)x(t) , \nonumber\\
         y(t)=& \frak Cx(t) \nonumber
\end{align}
with input $v$ and output $y$
is a well-posed  port-Hamiltonian system.
\end{proposition}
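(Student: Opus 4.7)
The plan is to verify two things separately: first, that the closed-loop system $(\frak A,\frak B - F\frak C,\frak C)$ still fits the port-Hamiltonian template \eqref{eqn:pdeio}, and second, that it is well-posed as a boundary control and observation system.

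For the port-Hamiltonian structure, the differential expression $\frak A$, the state space $X$, the Hamiltonian $\mathcal H$, and the matrices $P_0,P_1$ are unchanged by the feedback. Only the boundary input operator is modified, and it can be written as $(\frak B - F\frak C)x = (\widetilde W_B - F\widetilde W_C)\begin{bmatrix}(\mathcal H x)(1)\\(\mathcal H x)(0)\end{bmatrix}$, so the new input matrix is $\widetilde W_B^{\text{new}} := \widetilde W_B - F\widetilde W_C$. If some $v \neq 0$ satisfied $v^*\widetilde W_B^{\text{new}} = 0$, then $[v^*,\,-v^*F]\begin{bmatrix}\widetilde W_B\\\widetilde W_C\end{bmatrix} = 0$, contradicting the full row rank of $\begin{bmatrix}\widetilde W_B\\\widetilde W_C\end{bmatrix}$ assumed in the original system; hence $\widetilde W_B^{\text{new}}$ has full row rank. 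The enlarged matrix $\begin{bmatrix}\widetilde W_B^{\text{new}}\\\widetilde W_C\end{bmatrix}$ then has full row rank via the factorization $\begin{bmatrix}I & -F\\0 & I\end{bmatrix}\begin{bmatrix}\widetilde W_B\\\widetilde W_C\end{bmatrix}$ whose left factor is invertible.

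For well-posedness, by Theorem \ref{wellposed} it suffices to show that $\frak A$ restricted to $\ker(\frak B - F\frak C) \cap \mathcal D(\frak A)$ generates a $C_0$-semigroup on $X$. I would obtain this from the abstract feedback theorem for well-posed linear systems, namely \cite[Theorem 13.1.12]{JacZwa12} (Weiss's feedback theorem): the original system $(\frak A, \frak B, \frak C)$ is well-posed by hypothesis and regular by Lemma \ref{lem:regular}, and $F$ is admissible in the sense that $I - DF$ is invertible. The theorem then produces a well-posed closed-loop system whose generator, input operator, and output operator are precisely $\frak A$ restricted to $\ker(\frak B - F\frak C)\cap\mathcal D(\frak A)$, the operator $\frak B - F\frak C$, and $\frak C$, with new transfer function $G(s)(I - FG(s))^{-1}$ and new feedthrough $D(I - FD)^{-1}$ made well-defined exactly by admissibility.

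The main obstacle I anticipate is the identification step of matching the abstract operators delivered by Weiss's theorem with the concrete port-Hamiltonian triple $(\frak A, \frak B - F\frak C, \frak C)$. I would carry this out by unwinding the mild-solution formula recalled in Section \ref{section2}, using the explicit right inverse $\widetilde B$ and the transfer-function representation $G(s) = \frak C(sI - A)^{-1}(\frak A\widetilde B - s\widetilde B) + \frak C\widetilde B$, and verifying that substituting the feedback law $u = F y + v$ into the original input/output equations yields precisely the boundary condition $v = (\frak B - F\frak C)x$ together with the unchanged output $y = \frak C x$. Once this identification is established, combining it with the structural verification of the first paragraph delivers the claim.
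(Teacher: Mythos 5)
Your proposal is correct and follows essentially the same route as the paper, which offers no independent argument but simply cites \cite[Theorem 13.1.12]{JacZwa12}: you defer the analytic core (well-posedness under admissible feedback, $I-DF$ invertible) to that same feedback theorem and add only the routine structural check that $\widetilde W_B-F\widetilde W_C$ and the stacked matrix retain full row rank, so the closed loop again fits the port-Hamiltonian template \eqref{eqn:pdeio}. That rank verification and the identification of the abstract closed-loop operators with $(\frak A,\frak B-F\frak C,\frak C)$ are exactly the content implicitly subsumed in the citation, so no gap remains.
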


\begin{definition}The well-posed port-Hamiltonian system \eqref{eqn:pdeio} is {\em exactly controllable}, if there exists a 
time $\tau >0$ such 
that for all $x_1 \in X$ there
exists a control function $u\in L^2((0,\tau);\mathbb{C}^n)$ such that the corresponding mild solution satisfies $x(0)=0$ 
and 
$x(\tau)=x_1$.
\end{definition}

\begin{proposition}\cite[c.f. Remark 6.9]{We94}\label{prop:contr}
Let $(\frak A, \frak B, \frak C)$ be a well-posed  port-Hamiltonian system  \eqref{eqn:pdeio}. Assume that
$F$ is an admissible feedback operator. 
Then the closed-loop system $(\frak A,  (\frak B-F\frak C), \frak C)$ is exactly controllable if and only if the open-loop 
system $(\frak A, \frak B, \frak C)$  is 
exactly controllable.
\end{proposition}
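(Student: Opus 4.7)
The plan is to show that the reachable sets from zero at time $\tau$ coincide for the open-loop and closed-loop systems, by exhibiting a bijection between admissible input functions $u$ and $v$ on $L^2((0,\tau);\mathbb{C}^n)$. Concretely, for any $\tau>0$ I would denote
\[
  \mathcal R^{ol}_\tau=\{x(\tau)\mid u\in L^2((0,\tau);\mathbb{C}^n),\ x(0)=0\}
\]
for the open-loop system $(\frak A,\frak B,\frak C)$, and analogously $\mathcal R^{cl}_\tau$ for the closed-loop system $(\frak A,\frak B-F\frak C,\frak C)$ driven by $v\in L^2((0,\tau);\mathbb{C}^n)$. Exact controllability of either system is equivalent to $\mathcal R^{\bullet}_\tau=X$ for some $\tau>0$, so it suffices to show $\mathcal R^{ol}_\tau=\mathcal R^{cl}_\tau$ for every $\tau>0$.

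The crucial observation is that the state equation $\dot x=\frak A x$ is the same in both systems; only the input/output relation at the boundary changes. Hence, if $u\in L^2((0,\tau);\mathbb{C}^n)$ is an open-loop input with $x(0)=0$ producing state trajectory $x(\cdot)$ and output $y\in L^2((0,\tau);\mathbb{C}^k)$ (well-defined by well-posedness of the open loop), then setting $v:=u-Fy$ gives an element of $L^2((0,\tau);\mathbb{C}^n)$ that, by construction, drives the closed-loop system $(\frak A,\frak B-F\frak C,\frak C)$ from $0$ along the very same trajectory $x(\cdot)$. In particular $x(\tau)\in\mathcal R^{cl}_\tau$, so $\mathcal R^{ol}_\tau\subseteq\mathcal R^{cl}_\tau$. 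The reverse inclusion is proved symmetrically: given $v\in L^2((0,\tau);\mathbb{C}^n)$ with $x(0)=0$, the closed-loop system (which is well-posed by the preceding proposition) yields $y\in L^2((0,\tau);\mathbb{C}^k)$, and $u:=Fy+v\in L^2((0,\tau);\mathbb{C}^n)$ then drives the open loop along the same trajectory, so $x(\tau)\in\mathcal R^{ol}_\tau$.

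The main technical point, where admissibility of $F$ enters, is to make the correspondence $u\leftrightarrow v$ rigorous. For classical inputs the relation $v=u-Fy(u)$ makes pointwise sense, but to get $v\in L^2$ one needs the input-output map $u\mapsto y$ of the open loop to extend boundedly to $L^2$, which is precisely well-posedness; conversely, to invert $v\mapsto u=Fy+v$ one needs the closed-loop input-output map $v\mapsto y$ to be bounded on $L^2$, which follows from well-posedness of the closed-loop system granted by the previous proposition under the admissibility hypothesis $I-DF$ invertible (with $D$ the feedthrough supplied by Lemma \ref{lem:regular}). I would then conclude by a density argument: establish the identity $\mathcal R^{ol}_\tau=\mathcal R^{cl}_\tau$ first on the dense set of classical inputs and extend by continuity of the input-to-state maps $u\mapsto x(\tau)$ and $v\mapsto x(\tau)$ from $L^2$ to $X$.

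I expect the main obstacle to be purely bookkeeping: keeping track of which input spaces, trajectories, and output spaces are being identified, and invoking admissibility of $F$ at precisely the right place to legitimise the $L^2$-bijection $u\leftrightarrow v$. Once that bijection is in hand, equality of the reachable sets, and hence equivalence of exact controllability, is immediate.
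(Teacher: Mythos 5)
The paper does not actually prove this proposition; it simply cites \cite[Remark 6.9]{We94}, so there is no in-paper argument to compare against. Your sketch is, in essence, the standard argument behind that citation, and it is correct: the state equation $\dot x=\frak A x$ is common to both systems, the inputs along one and the same trajectory are related by $v=u-Fy$ and $u=v+Fy$, and these correspondences are bounded on $L^2((0,\tau);\mathbb C^n)$ because the open loop is well-posed (so $u\mapsto y$ is bounded) and the closed loop is well-posed by admissibility of $F$ via the preceding proposition (so $v\mapsto y$ is bounded). Two points deserve slightly more care than your wording suggests. First, the claim that $v:=u-Fy$ ``by construction'' drives the closed loop along the same trajectory is immediate only for classical solutions with compatible data; for mild solutions you should phrase the conclusion as an operator identity, namely that the input-to-state map $\Phi_\tau$ of the open loop equals $\Phi_\tau^{cl}\circ(I-F\mathbb F_\tau)$, where $\mathbb F_\tau$ is the open-loop input-output map, verified on the dense class of smooth compatible inputs and extended by boundedness; equality (or mutual inclusion) of reachable sets then follows from $\ran\Phi_\tau\subseteq\ran\Phi_\tau^{cl}$ and the symmetric identity with $(I+F\mathbb F_\tau^{cl})$, rather than by ``extending the set identity by continuity,'' which is not literally a limiting argument on sets. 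Second, note that the paper's definition of exact controllability quantifies over some $\tau>0$, so you should record that your reachable-set identity holds for every $\tau$, which your argument does give. With these bookkeeping points made explicit, your proof is a complete and self-contained substitute for the citation.
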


\section{Exact controllability for port-Hamiltonian systems}

This section is devoted to the main result of this paper, that is, we show that every well-posed port-Hamiltonian system 
\eqref{eqn:pde} is exactly controllable.

Exact controllability for impedance energy preserving port-Hamiltonian system has been studied in \cite{JacZwa19}. 

\begin{proposition}\cite[Corollary 10.7]{JacZwa19} \label{gamm} An  impedance 
energy  
preserving port-Hamiltonian system \eqref{eqn:pdeio} is exactly controllable.
\end{proposition}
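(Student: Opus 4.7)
The plan is to reduce the problem, via an admissible feedback, to a canonical impedance energy preserving form in which the principal part can be diagonalised, and then to exploit finite speed of propagation together with the energy identity.

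\textbf{Step 1 (Canonical form via feedback).} By Theorem \ref{iep} impedance energy preservation is equivalent to a fixed Gram-type identity tying $W_B,W_C$ and $\Sigma$. Among all admissible pairs, the distinguished choice $W_B=[\,I\ \ 0\,]$, $W_C=[\,0\ \ I\,]$ (equivalently $u=f_{\delta,\mathcal{H}x}$, $y=e_{\delta,\mathcal{H}x}$) already satisfies $W_B\Sigma W_B^*=0$, so by Remark \ref{rem} the associated $A$ generates a unitary group. For a regular impedance energy preserving system the feedthrough $D$ is skew-Hermitian, hence for any other admissible impedance energy preserving parametrisation one can choose a feedback $F$ with $I-DF$ invertible that realises the corresponding change of $\widetilde W_B$. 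Since exact controllability is preserved under admissible feedback by Proposition \ref{prop:contr}, I may work in the canonical form from the outset.

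\textbf{Step 2 (Riemann invariants).} Using $P_1\mathcal{H}=S^{-1}\Delta S$ with $S,\Delta\in C^1$, I introduce the Riemann invariants $z(\zeta,t):=S(\zeta)\mathcal{H}(\zeta)x(\zeta,t)$. A direct calculation transforms \eqref{eqn:pdeio} into
\begin{equation*}
\partial_t z(\zeta,t) = \Delta(\zeta)\,\partial_\zeta z(\zeta,t) + M(\zeta)\,z(\zeta,t),
\end{equation*}
with a bounded coupling matrix $M$ absorbing $P_0$ and the derivatives of $S$ and $\mathcal{H}$. Since $P_1$ is invertible Hermitian and $\mathcal{H}$ is positive, $\Delta$ has no zero eigenvalue; partition $\Delta=\mathrm{diag}(\Delta^+,\Delta^-)$ with $\Delta^+\succ 0$, $\Delta^-\prec 0$, and split $z=(z^+,z^-)$ accordingly. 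In the canonical form of Step 1, the input $u$ corresponds, up to an invertible block change of variables, to prescribing the incoming characteristic data $z^+(0,\cdot)$ and $z^-(1,\cdot)$.

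\textbf{Step 3 (Characteristics and Volterra inversion).} Fix $x_0=0$. Along each characteristic of $\Delta^\pm$ the solution at $(\zeta,t)$ is the transported boundary datum plus a Volterra integral of $Mz$, uniquely and continuously solvable in $L^2$ by Gr\"onwall iteration. Choosing a horizon
\begin{equation*}
\tau > 2\max_i \int_0^1 |\Delta_{ii}(\zeta)|^{-1}\,d\zeta,
\end{equation*}
ensures that every point $(\zeta,\tau)$ lies on a characteristic emanating from a controlled boundary within $[0,\tau]$. For a prescribed target $z_1\in L^2((0,1);\mathbb{C}^n)$ the condition $z(\cdot,\tau)=z_1$ becomes a bounded Volterra equation for $u\in L^2((0,\tau);\mathbb{C}^n)$, invertible by the same iteration, and transforming back via $x=\mathcal{H}^{-1}S^{-1}z$ supplies the desired steering control and establishes exact controllability on $X$.

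\textbf{Main obstacle.} The decisive point is the $L^2$-invertibility of the characteristic Volterra operator in Step 3: the off-diagonal part of $M$ mixes $z^+$ and $z^-$, and the resulting fixed-point system is not a priori well-conditioned on $L^2((0,\tau);\mathbb{C}^n)$. Here impedance energy preservation is essential, because the boundary energy identity $\frac{d}{dt}\|x(t)\|_X^2 = 2\,\mathrm{Re\,}\langle u(t),y(t)\rangle_{\mathbb{C}^n}$ provides the uniform energy bound that closes the iteration and yields surjectivity of the input-to-state map.
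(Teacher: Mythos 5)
Your Step 1 contains a genuine gap, and it is the load-bearing step. An admissible feedback transforms the input operator as $\frak B_o-F\frak C_o$, i.e.\ on the level of matrices it sends the canonical pair $W_B^o=\left[\begin{smallmatrix} I & 0\end{smallmatrix}\right]$, $W_C^o=\left[\begin{smallmatrix} 0 & I\end{smallmatrix}\right]$ to $\left[\begin{smallmatrix} I & -F\end{smallmatrix}\right]$; even allowing an additional invertible change of input coordinates you can only reach matrices $\left[\begin{smallmatrix} V & -VF\end{smallmatrix}\right]$ whose first $n\times n$ block is invertible. But there are impedance energy preserving systems for which this block is singular: $W_B=\left[\begin{smallmatrix} 0 & I\end{smallmatrix}\right]$, $W_C=\left[\begin{smallmatrix} I & 0\end{smallmatrix}\right]$ (i.e.\ $u=e_{\delta,\mathcal H x}$, $y=f_{\delta,\mathcal H x}$) satisfies the condition of Theorem \ref{iep}, yet is not a feedback transform of your canonical form. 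So ``I may work in the canonical form from the outset'' is unjustified, and Proposition \ref{prop:contr} cannot be invoked, since the given system has not been exhibited as a closed loop of the canonical one with an admissible $F$ (the skew-Hermitian feedthrough claim and the invertibility of $I-DF$ for the needed $F$ are also only asserted). Note that the difficulty of passing between a general full-rank $W_B$ and an impedance energy preserving one is precisely what the paper handles, in the opposite direction, in the proof of Theorem \ref{control} via Lemma \ref{Right}; it is not available for free here.

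Step 3 also falls short of a proof. The map $u\mapsto z(\cdot,\tau)$ goes from $L^2((0,\tau);\mathbb C^n)$ to $L^2((0,1);\mathbb C^n)$ and is not a Volterra operator on a single space, so ``invertible by the same iteration'' does not apply; moreover, even in your canonical form the input $f_{\delta,\mathcal H x}$ couples the traces at $\zeta=0$ and $\zeta=1$ and hence mixes incoming with outgoing characteristics, so prescribing $u$ is not, pointwise in time, an invertible re-parametrisation of the incoming data $z^+(0,\cdot)$, $z^-(1,\cdot)$ — the incoming data depends on $u$ \emph{and} on the state-dependent outgoing traces. Finally, the closing claim that the energy identity $\frac{d}{dt}\|x\|_X^2=2\Re\langle u,y\rangle$ ``yields surjectivity of the input-to-state map'' is a non sequitur: it gives a priori bounds, not ontoness. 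The paper's argument is entirely different and much shorter: impedance energy preservation gives $W_B\Sigma W_B^*=0$, so $A$ generates a unitary group (Remark \ref{rem}); since $-A$ then generates a bounded semigroup, exact controllability is equivalent to optimizability by \cite[Corollary 2.2]{ReWe}, and optimizability follows from the output feedback $u=-ky$, which produces a state trajectory in $L^2((0,\infty);X)$ by \cite[Lemma 7]{HuPa}. If you want to keep a characteristics-based proof, you would need to treat general impedance energy preserving $W_B,W_C$ directly and prove the reachability of the final state by an explicit construction (e.g.\ a sideways/Russell-type argument), not by the feedback reduction and Volterra claim as stated.
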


For completeness we include the proof of Proposition \ref{gamm}.

\begin{proof} As the port-Hamiltonian system \eqref{eqn:pdeio} is  impedance energy preserving the corresponding 
operator $A$
generates 
a unitary strongly continuous group. Thus, $-A$ generates a bounded strongly continuous semigroup and exact controllability 
is equivalent to 
optimizability, \cite[Corollary 2.2]{ReWe}. The system is called {\em optimizable} if for all $x_0 \in X$ there exists a 
control function $u \in L^2((0,\infty); \mathbb{C}^n)$ such that the 
corresponding mild solution $x$ satisfies $x \in L^2((0,\infty);X)$. Thus it is sufficient to show that the 
port-Hamiltonian system \eqref{eqn:pdeio} is optimizable.
Let $x_0\in X$ be arbitrarily. In \cite[Lemma 7]{HuPa} it is shown that for every $k>0$ the choice $u(t)=-ky(t)$ leads to a 
mild solution in $L^2((0,\infty);X)$.
This shows 
optimizability of system \eqref{eqn:pdeio} and concludes the proof.
\end{proof}

Now we can formulate our main result.
\begin{theorem}\label{control}
Every well-posed port-Hamiltonian system \eqref{eqn:pde} is exactly controllable.
\end{theorem}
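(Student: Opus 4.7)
The plan is to reduce the theorem to Proposition~\ref{gamm} by exhibiting the given system as the closed loop of an impedance energy preserving port-Hamiltonian system under an admissible feedback; Proposition~\ref{prop:contr} will then transfer exact controllability back to the original system. The three ingredients are (i) augmenting \eqref{eqn:pde} by a suitable observation so that Lemma~\ref{lem:regular} applies, (ii) constructing a Hermitian matrix $F$ so that $(\mathfrak{A},\mathfrak{B}+F\mathfrak{C},\mathfrak{C})$ is impedance energy preserving in the sense of Theorem~\ref{iep}, and (iii) verifying that $F$ is admissible as a feedback for the resulting system.

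For (i) I would pick any full row rank $\widetilde W_C \in \mathbb{C}^{n\times 2n}$ with $\left[\begin{smallmatrix}\widetilde W_B \\ \widetilde W_C\end{smallmatrix}\right]$ invertible; well-posedness of the augmented boundary control and observation system is inherited from that of \eqref{eqn:pde} via Theorem~\ref{wellposed} (both systems have the same $A$), and regularity follows from Lemma~\ref{lem:regular}. For (ii) I would pass to the boundary flow/effort coordinates, in which Theorem~\ref{iep} demands
\begin{align*}
W_C\Sigma W_C^* &= 0,\\
(W_B+FW_C)\Sigma W_C^* &= I,\\
(W_B+FW_C)\Sigma (W_B+FW_C)^* &= 0.
\end{align*}
Since $\Sigma$ is a Hermitian form of signature $(n,n)$ on $\mathbb{C}^{2n}$, its Lagrangian Grassmannian is non-empty and enjoys generic transversality: an $n$-dimensional Lagrangian subspace transverse to $\ker(W_B\Sigma)$ exists, and choosing $W_C$ with rows spanning such a subspace, followed by an invertible $n\times n$ rescaling, yields simultaneously $W_C\Sigma W_C^* = 0$ and $W_B\Sigma W_C^* = I$. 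Expanding the third identity and using that $W_C\Sigma W_B^* = I$, it collapses to $F+F^* = -W_B\Sigma W_B^*$, and since $W_B\Sigma W_B^*$ is Hermitian, the explicit choice $F := -\tfrac{1}{2}W_B\Sigma W_B^*$ solves it.

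With these choices $(\mathfrak{A},\mathfrak{B}+F\mathfrak{C},\mathfrak{C})$ is impedance energy preserving, hence well-posed with a unitary group semigroup by Remark~\ref{rem}. The original system is its closed loop under the feedback $u = Fy + v$, so Proposition~\ref{gamm} combined with Proposition~\ref{prop:contr} will conclude the proof \emph{provided} the admissibility condition that $I-D_0F$ is invertible holds, where $D_0$ is the feedthrough of the impedance energy preserving system (which exists by Lemma~\ref{lem:regular}). I expect this last step to be the main obstacle, since no closed-form expression for $D_0$ is available and admissibility cannot be read off from the matrix $F$ alone. The most promising routes are either to exploit the remaining freedom in choosing the Lagrangian subspace for $W_C$ to ensure that $I-D_0F$ is invertible, or to leverage the unitarity of the iep semigroup to constrain $D_0$ and thereby verify the condition directly.
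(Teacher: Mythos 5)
Your overall strategy (realize the given system as the closed loop of an impedance energy preserving system and invoke Proposition~\ref{gamm} together with Proposition~\ref{prop:contr}) is the same as the paper's, but the proof as written has a genuine gap exactly where you flag it: the admissibility of $F$. In your construction, once $W_C$ is normalized so that $W_C\Sigma W_C^*=0$ and $W_B\Sigma W_C^*=I$, the Hermitian part of the feedback is \emph{forced}, $F+F^*=-W_B\Sigma W_B^*$; since the original system is only assumed well-posed (not impedance passive), $W_B\Sigma W_B^*$ can be an arbitrary Hermitian matrix, so $F$ cannot be made small, its sign is uncontrolled, and nothing beyond regularity is known about the feedthrough $D_0$ of the auxiliary system. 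Consequently there is no mechanism in your argument to guarantee that $I-D_0F$ is invertible, and neither of your two suggested escape routes is carried out (a positive-realness argument also fails here, because $F+F^*\le 0$ is not available in general). Without this step the transfer of exact controllability via Proposition~\ref{prop:contr} does not go through, so the proof is incomplete. A secondary, repairable issue: the existence of a maximal $\Sigma$-neutral (``Lagrangian'') $W_C$ with $W_B\Sigma W_C^*$ invertible is asserted by ``generic transversality'', but the $\Sigma$-neutral subspaces form only a real form of the Grassmannian (parametrized by the unitary group), so transversality to the arbitrary $n$-dimensional subspace $\ker W_B$ needs an actual argument.

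The paper resolves precisely the admissibility obstacle by building in a free scalar gain. It first proves (Lemma~\ref{alpha}) that exact controllability of $(\frak A,\frak B)$ is equivalent to that of $(\frak A,\alpha\frak B)$ for any $\alpha\neq 0$. Then, using a right inverse $R=\begin{bsmallmatrix}R_1\\ R_0\end{bsmallmatrix}$ of $W_B$ with $R_1,R_0$ invertible (Lemma~\ref{Right}), it forms the impedance energy preserving, hence exactly controllable, system with input matrix $\begin{bsmallmatrix}R_1^{-1}&0\end{bsmallmatrix}$ and output matrix $\begin{bsmallmatrix}\alpha R_1^{-1}&\alpha R_0^{-1}\end{bsmallmatrix}$, and recovers $\alpha W_B$ as the closed loop under the feedback $F=\frac{1}{\alpha}W_0R_0$. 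Because $\alpha$ is chosen \emph{after} the feedthrough $D$ of this auxiliary system is known (e.g.\ $\alpha=2\|D\|\,\|W_0R_0\|$), one gets $\|DF\|<1$, so $I-DF$ is invertible and admissibility is automatic. This smallness mechanism is exactly what your construction lacks; importing it is not a cosmetic fix, since in your setup scaling $W_B$ rescales the forced Hermitian part of $F$ quadratically and changes $W_C$ and $D_0$ along with it, so you would essentially have to redo the construction along the paper's lines.
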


For the proof of our main result we need the following lemmas.
\begin{lemma}\label{Right}
 Let $\begin{bsmallmatrix}W_1 & W_0 \end{bsmallmatrix} \in \mathbb{C}^{n\times 2n}$ have full row rank with $W_1, W_0 \in 
\mathbb{C}^{n\times 
n}$. 
Then, there exist invertible matrices $\tilde R_1, \tilde R_0\in  \mathbb{C}^{n\times n}$
such that $\begin{bsmallmatrix}W_1 & W_0 \end{bsmallmatrix}\begin{bsmallmatrix}\tilde R_1 \\ \tilde R_0 
\end{bsmallmatrix}=I$.
\end{lemma}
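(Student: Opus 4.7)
My plan is a genericity argument. Since $[W_1\;W_0]\in\mathbb{C}^{n\times 2n}$ has full row rank, column operations produce an invertible $T\in\mathbb{C}^{2n\times 2n}$ with $[W_1\;W_0]T=[I\;0]$. Writing $T=\begin{bsmallmatrix}T_{11}&T_{12}\\T_{21}&T_{22}\end{bsmallmatrix}$ in $n\times n$ blocks, a short check (using $R=T(T^{-1}R)$ and $[I\;0]T^{-1}R=I$) shows that every right inverse of $[W_1\;W_0]$ is of the form $\begin{bsmallmatrix}T_{11}+T_{12}L\\T_{21}+T_{22}L\end{bsmallmatrix}$ for some $L\in\mathbb{C}^{n\times n}$. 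The task then becomes to find $L$ making both $T_{11}+T_{12}L$ and $T_{21}+T_{22}L$ invertible.

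The maps $L\mapsto\det(T_{11}+T_{12}L)$ and $L\mapsto\det(T_{21}+T_{22}L)$ are polynomials in the $n^2$ entries of $L$, and since $\mathbb{C}[L_{ij}]$ is an integral domain, provided neither of these polynomials is identically zero their product is not, and its zero set is a proper Zariski-closed subset of $\mathbb{C}^{n\times n}$---any $L$ outside that set then yields the desired invertible $\tilde R_1,\tilde R_0$. To verify the non-vanishing, I would use that $T$ invertible implies the top rows $[T_{11}\;T_{12}]$ form a full-row-rank $n\times 2n$ matrix, hence $\operatorname{Im}T_{11}+\operatorname{Im}T_{12}=\mathbb{C}^n$ (and analogously for $[T_{21}\;T_{22}]$). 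Since the $i$-th column of $T_{11}+T_{12}L$ equals the $i$-th column of $T_{11}$ plus a freely chosen element of $\operatorname{Im}T_{12}$, I can select a subset of columns of $T_{11}$ whose classes form a basis of $\mathbb{C}^n/\operatorname{Im}T_{12}$, giving a decomposition $\mathbb{C}^n=U\oplus\operatorname{Im}T_{12}$, and then adjust the remaining columns by elements of $\operatorname{Im}T_{12}$ so that, after the column reductions, they provide a basis of $\operatorname{Im}T_{12}$ extending the basis of $U$ to one of $\mathbb{C}^n$. The same construction handles $T_{21}+T_{22}L$.

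The main obstacle is this last step---turning the ambient spanning condition $\operatorname{Im}T_{11}+\operatorname{Im}T_{12}=\mathbb{C}^n$ into a concrete column-by-column choice of perturbations that yields $n$ linearly independent vectors, and verifying that the freedom in each column (elements of $\operatorname{Im}T_{12}$) is large enough to complete the selected sub-basis to a full basis of $\mathbb{C}^n$. Once this elementary linear-algebra point is established for both blocks, the polynomial non-vanishing argument of the second paragraph closes the proof immediately.
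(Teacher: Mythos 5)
Your proposal is correct, but it takes a genuinely different route from the paper. You parametrize all right inverses of $\begin{bsmallmatrix}W_1 & W_0\end{bsmallmatrix}$ as $\begin{bsmallmatrix}T_{11}+T_{12}L\\ T_{21}+T_{22}L\end{bsmallmatrix}$ and then argue by genericity: both determinants are polynomials in the entries of $L$, so over $\mathbb{C}$ it suffices that neither vanishes identically. The step you flag as the main obstacle does close along the lines you sketch: since $\begin{bsmallmatrix}T_{11} & T_{12}\end{bsmallmatrix}$ has full row rank, the classes of the columns of $T_{11}$ span $\mathbb{C}^n/\operatorname{Im}T_{12}$; choose $r:=n-\operatorname{rank}T_{12}$ of them whose classes form a basis, giving $\mathbb{C}^n=U\oplus\operatorname{Im}T_{12}$ with $U$ spanned by those columns; the remaining $n-r=\dim\operatorname{Im}T_{12}$ columns can be perturbed by elements of $\operatorname{Im}T_{12}$ (which changes only their $\operatorname{Im}T_{12}$-components in this decomposition) so that these components run through a basis of $\operatorname{Im}T_{12}$, and the resulting column family is block-triangular with respect to $U\oplus\operatorname{Im}T_{12}$, hence a basis; the same applies to $\begin{bsmallmatrix}T_{21} & T_{22}\end{bsmallmatrix}$. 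The paper instead proceeds constructively: after row reduction it builds $R_1$ and $R_0$ explicitly from right inverses of the full-row-rank sub-blocks $W_1^{n-k}$, $W_0^{n-\ell}$ together with bases of $\ker W_1$ and $\ker W_0$, so that $M:=W_1R_1+W_0R_0$ is invertible (block upper triangular), and then sets $\tilde R_i:=R_iM^{-1}$. Your route yields a slightly stronger conclusion (a generic right inverse has both blocks invertible) at the price of using that the ground field is infinite, whereas the paper's construction is explicit and works over any field; note also that your existence-of-$L$ sub-lemma is in spirit the same adapted-basis argument the paper runs directly on the pair $(W_1,W_0)$.
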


\begin{proof}
Let $\begin{bsmallmatrix}W_1 & W_0 \end{bsmallmatrix}$   have full row rank with $\mathrm{rank\,} W_1=n-k$, $k\in 
\{0,\ldots,n\}$, and $\mathrm{rank\,} 
W_0=n-\ell$ with $\ell\in \{0,\ldots,n\}$.  Clearly $n-k+n-\ell\ge n$, or equivalently, $k+\ell\le n$.

By $W_1^{n-k}$ we denote the first $n-k$ rows of $W_1$ and $W_1^k$ denotes the last $k$ 
rows. Similarly, by $W_0^{n-\ell}$ we denote the last $n-\ell$ rows of $W_0$ and by $W_0^\ell$ the first $\ell$ rows. That 
is 
\begin{equation*}
W_1=\begin{bmatrix}W_1^{n-k} \\ W_1^{k}\end{bmatrix}\qquad \mbox{ and }\qquad W_0=\begin{bmatrix}W_0^{\ell} \\ 
W_0^{n-\ell}\end{bmatrix}. 
\end{equation*}
Without loss of generality, using row reduction and the fact that 
$\mathrm{rank\,}\begin{bsmallmatrix}W_1 & W_0 \end{bsmallmatrix}=n$, we may assume that $W_1^k=0$ and that $W^{n-k}_1$ and 
$W_0^{n-\ell}$ 
have full row rank.

We choose right inverses $R_1^{n-k} \in \mathbb{C}^{n\times (n-k)}$ for $W_1^{n-k}$ and $R_0^{n-\ell}\in 
\mathbb{C}^{n\times (n-\ell)}$ 
for $W_0^{n-\ell}$. Thus, 
\begin{equation*}
W_1^{n-k}R_1^{n-k}=I \qquad \mbox{ and }\qquad W_0^{n-\ell}R_0^{n-\ell}=I. 
\end{equation*}
Clearly, the columns of $R_1^{n-k}$ and $R_0^{n-\ell}$ are 
linearly independent and are not elements of the kernel of $W_1$ and $W_0$, respectively.

  Let $R_1^{k}\in \mathbb{C}^{n\times k}$ 
consisting of columns spanning the kernel of $W_1$, and let $R_0^{\ell}\in \mathbb{C}^{n \times \ell}$ consisting of 
columns 
spanning 
the kernel of $W_0$.
We define $R_1=\begin{bmatrix}R_1^{n-k} & R_1^k\end{bmatrix}\in \mathbb{C}^{n\times n}$ and $R_0=\begin{bmatrix}R_0^{\ell} 
& 
R_0^{n-\ell}\end{bmatrix}\in \mathbb{C}^{n\times n}$.
Thus, $R_1$ and $R_0$ are
invertible and it yields
\begin{align*}
&W_1R_1+W_0R_0&\\
&=\begin{bmatrix}I_{n-k}& 0_{(n-k)\times k}\\ 0_{k\times (n-k)}& 
0_{k\times k}\end{bmatrix}+\begin{bmatrix} 0_{\ell\times \ell} & W_0^lR_0^{n-\ell} \\ 0_{(n-\ell)\times \ell} & I_{n-\ell} 
\end{bmatrix}.
\end{align*}
Thus, $W_1R_1+W_0R_0:=M$ is invertible as an upper triangular matrix and we define $\tilde R_1:=R_1M^{-1}$ and 
$\tilde 
R_0:=R_0M^{-1}$ to obtain the assertion of the lemma.
\end{proof}

\begin{lemma}\label{alpha}
Let $\alpha \neq 0$ and $(\frak A, \frak B)$ be a well-posed port-Hamiltonian system. Then the port-Hamiltonian system 
$(\frak A, \alpha\frak B)$ is well-posed as well. Moreover,  the system $(\frak A, \frak B)$ is exactly controllable if and 
only if the 
system $(\frak A, \alpha\frak B)$ is exactly controllable.
\end{lemma}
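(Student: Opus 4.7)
The plan is to exploit the fact that multiplication by a nonzero scalar $\alpha$ does not change the kernel of $\mathfrak{B}$, and hence leaves the infinitesimal generator $A$ of the associated semigroup untouched. Concretely, if we denote by $A_\alpha$ the restriction of $\mathfrak{A}$ to $\mathcal{D}(\mathfrak{A})\cap\ker(\alpha\mathfrak{B})$, then since $\alpha\neq 0$ we have $\ker(\alpha\mathfrak{B})=\ker(\mathfrak{B})$, so $A_\alpha=A$. By Theorem \ref{wellposed}, well-posedness of the port-Hamiltonian system $(\mathfrak{A},\mathfrak{B})$ is equivalent to $A$ being the generator of a $C_0$-semigroup on $X$; the same characterization applied to $(\mathfrak{A},\alpha\mathfrak{B})$ yields well-posedness of that system as well. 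This takes care of the first part.

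For the second part, I would compare the mild solutions of the two systems. Let $\tilde B\in\mathcal{L}(\mathbb{C}^n,X)$ be a right inverse of $\mathfrak{B}$ as introduced before Theorem \ref{wellposed}. Then $\tilde B_\alpha:=\alpha^{-1}\tilde B$ is a right inverse of $\alpha\mathfrak{B}$, since $(\alpha\mathfrak{B})(\alpha^{-1}\tilde B)u=\mathfrak{B}\tilde B u=u$ for every $u\in\mathbb{C}^n$, and it satisfies $\tilde B_\alpha u\in\mathcal{D}(\mathfrak{A})$ with $\mathfrak{A}\tilde B_\alpha\in\mathcal{L}(\mathbb{C}^n,X)$. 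Using the mild solution formula recalled in Section \ref{section2}, the mild solution of $(\mathfrak{A},\mathfrak{B})$ starting from $x_0=0$ with input $u$ reads
\begin{equation*}
x(t)=\int_0^t T_{-1}(t-s)(\mathfrak{A}\tilde B-A_{-1}\tilde B)u(s)\,ds,
\end{equation*}
while the mild solution of $(\mathfrak{A},\alpha\mathfrak{B})$ starting from $0$ with input $\tilde u$ is
\begin{equation*}
\tilde x(t)=\alpha^{-1}\int_0^t T_{-1}(t-s)(\mathfrak{A}\tilde B-A_{-1}\tilde B)\tilde u(s)\,ds,
\end{equation*}
because the same semigroup $(T(t))_{t\ge 0}$ and hence its extension appear in both formulas.

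Comparing these two expressions, I see that $\tilde x(t)=x(t)$ precisely when $\tilde u=\alpha u$. Since $u\mapsto \alpha u$ is a bijection on $L^2((0,\tau);\mathbb{C}^n)$ for $\alpha\neq 0$, the reachable sets of the two systems at every time $\tau>0$ coincide. Consequently, the open-loop system $(\mathfrak{A},\mathfrak{B})$ is exactly controllable if and only if $(\mathfrak{A},\alpha\mathfrak{B})$ is. I do not anticipate a real obstacle here; the only point requiring some care is to record explicitly that the right inverse scales by $\alpha^{-1}$ so that the resulting input-to-state map of the rescaled system is just $\alpha^{-1}$ times the original one, which leaves the reachable set invariant.
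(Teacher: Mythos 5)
Your proposal is correct and follows essentially the same route as the paper: well-posedness is preserved because $\ker(\alpha\mathfrak{B})=\ker(\mathfrak{B})$ leaves the generator $A$ unchanged, and exact controllability transfers by scaling the input by $\alpha$ (respectively $\tfrac{1}{\alpha}$) without changing the mild solution. Your explicit observation that the right inverse, and hence the input-to-state map, scales by $\alpha^{-1}$ simply spells out the detail the paper leaves implicit.
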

\begin{proof} Well-posed of the scaled system follows immediately.
The controllability of the two systems is equivalent, since we can scale the input function $u$ of  one system by 
$\alpha$ or $\frac{1}{\alpha}$ to get an input for the other system without changing the mild solution.
\end{proof} 

{\em Proof of Theorem \ref{control}:}
We start with an arbitrary port-Hamiltonian system \eqref{eqn:pde} described by the tuple $(\frak A, \frak B)$.

By Lemma \ref{alpha}, this system is exactly 
controllable if and only if for some $\alpha>0$ the system $(\frak A, \alpha\frak B)$
 is exactly 
controllable. We aim to prove that there exists an $\alpha >0$ such that the system $(\frak A, \alpha\frak B)$ 
is exactly controllable.

Thus, we aim to write the system  $(\frak A, \alpha\frak B)$ as a closed-loop system of an exactly controllable system  
$(\frak A, \frak{B}_o,\frak{C}_o)$. To construct $(\frak A, \frak{B}_o,\frak{C}_o)$ we find an impedance energy 
preserving system $(\frak A, \frak{B}_o,\frak{\tilde C})$  which is exactly controllable by Proposition \ref{gamm}.

 By \eqref{operatorB} and \eqref{operatorB2}, the operator $\frak B$ is described by a full row rank $n\times 2n$-matrix 
 \begin{align*}
 W_B=\begin{bmatrix} W_1 &W_0\end{bmatrix}.
 \end{align*}
 Using Lemma \ref{Right} there exists a  matrix $R=\left[\begin{smallmatrix} R_1 \\ R_0 \end{smallmatrix} \right]\in 
\mathbb{C}^{2n \times n}$ such 
that 
\begin{align*}
W_B R=I
\end{align*}
 and $R_1, R_0 \in \mathbb{C}^{n \times n}$ are invertible. If $W_0=0$, without loss of generality we may assume that 
$R_0=I$ and 
$R_1=W_1^{-1}$.

We now consider the port-Hamiltonian system $(\frak A,{\frak B}_o, \widetilde{\frak C})$, where
\begin{align*}
{\frak B}_ox=
\begin{bmatrix}R_1^{-1} & 0 \end{bmatrix}\begin{bmatrix}f_{\delta,\mathcal{H} x}\\ e_{\delta,\mathcal{H} x}\end{bmatrix}
\end{align*}
and
\begin{align*}
\widetilde{\frak C}x=\begin{bmatrix} 0& R_1^{*} \end{bmatrix}\begin{bmatrix}f_{\delta,\mathcal{H} x}\\ 
e_{\delta,\mathcal{H} x}\end{bmatrix}.
\end{align*}

Obviously, the port-Hamiltonian  system $(\frak A,{\frak B}_o, \widetilde{\frak C})$ is impedance energy preserving. Then 
it 
follows from Proposition \ref{gamm}
that $(\frak A,{\frak B}_o, \widetilde{\frak C})$ is exactly controllable.

If $W_0=0$, then $(\frak A,{\frak B})=(\frak A,{\frak B}_o)$ and thus the statement is proved with $\alpha=1$.

We now assume that $W_0\not=0$.
In this case we consider the port-Hamiltonian system $(\frak A,{\frak B}_o, {\frak C}_o)$, where
\begin{align*}
{\frak C}_ox=
\begin{bmatrix}\alpha R_1^{-1} &\alpha R_0^{-1} \end{bmatrix}\begin{bmatrix}f_{\delta,\mathcal{H} x}\\ 
e_{\delta,\mathcal{H} x}\end{bmatrix}.
\end{align*}
The constant $\alpha>0$ will be chosen later.
The 
matrix $\begin{bsmallmatrix}R_1^{-1} & 0 \\\alpha R_1^{-1} &\alpha R_0^{-1} \end{bsmallmatrix}$ is invertible and the 
port-Hamiltonian system $(\frak A,{\frak B}_o,{\frak C}_o)$
 is still exactly controllable, since changing the output does not influence controllability.
 
 The port-Hamiltonian system $(\frak A,{\frak B}_o, {\frak C}_o)$ is regular, see Theorem \ref{wellposed} and Lemma 
\ref{lem:regular}. By $D$ we denote the feedthrough operator of $(\frak A,{\frak B}_o, {\frak C}_o)$ and 
 we choose 
 \begin{align*}
 \alpha=\begin{cases} 2 \left\|{D}\right\| \left\| {W_0R_0}\right\|, & D\not=0\\
 1, & D=0 \end{cases}.
 \end{align*}
 Then $\alpha>0$ and the matrix
 \begin{align*}F=\frac{1}{\alpha}W_0R_0
 \end{align*}
 is an admissible feedback operator for $(\frak A,{\frak B}_o, {\frak C}_o)$ as $\|DF\|<1$ (which implies invertibility of 
$I-DF$).
 
 We now consider the closed-loop system as shown in Figure \ref{fig} and 
obtain
\begin{align*}
   \dot{x} (t) &= \frak A  x(t),\qquad   x(0)= x_0, \\
   u_\alpha(t) &= \alpha  ( u_o(t) - F y_o(t))\\
   &= \alpha ({\frak B}_o-F{\frak C}_o)x(t)\\
   &= \left( \alpha \begin{bmatrix}R_1^{-1} & 0 \end{bmatrix} -W_0R_0\begin{bmatrix}\alpha R_1^{-1} &\alpha R_0^{-1} 
\end{bmatrix}\right)\begin{bmatrix}f_{\delta,\mathcal{H} x}\\ e_{\delta,\mathcal{H} x}\end{bmatrix}\\
   &=\alpha   W_B \begin{bmatrix}f_{\delta,\mathcal{H} x}\\ e_{\delta,\mathcal{H} x}\end{bmatrix}.
\end{align*}
Thus, the closed-loop system equals the port-Hamiltonian system $(\frak A, \alpha\frak B)$. As the open-loop system $(\frak 
A,{\frak B}_o, {\frak C}_o)$ is exactly controllable, by Theorem \ref{prop:contr}
the port-Hamiltonian system $(\frak A, \alpha\frak B)$ is exactly controllable. 

\tikzstyle{block} = [draw,  rectangle, 
    minimum height=3em, minimum width=6em]
    \tikzstyle{blocksmall} = [draw,  rectangle, 
    minimum height=1.5em, minimum width=3em]
\tikzstyle{sum} = [draw,  circle, node distance=2cm]
\tikzstyle{input} = [coordinate]
\tikzstyle{output} = [coordinate]
\tikzstyle{pinstyle} = [pin edge={to-,thin,black}]

\begin{figure}[ht]
    \begin{center}

\begin{tikzpicture}[auto, node distance=1.8cm,>=latex']
    \node [input, name=input] {};
		\node [blocksmall, right of=input] (op){$\frac{1}{\alpha}$};
    \node [sum, right of=op] (sum) {};
    \node [block, right of=sum] (system) {$(\frak A,{\frak B}_o, {\frak C}_o)$};
    \node [output, right of=system] (output) {};
    \node [block, below of=system] (feedback) {$F=\frac{1}{\alpha} W_0R_0$};
    \draw [draw,->] (input) -- node {$u_{\alpha}$} (op);
		\draw [->] (op) -- node {} (sum);
		\draw [->] (sum) -- node {$u_o$} (system);
				\draw [-] (system) -- node {$y_o$} (output);
					    \draw [->] (output) |- node[pos=0.99] {} 
        node [near end] {} (feedback);
    \draw [->] (feedback) -| node[pos=0.99] {$+$} 
        node [near end] {} (sum);
\end{tikzpicture}
    \end{center}
    \caption{$(\frak A, \alpha\frak B)$ as a closed-loop system}\label{fig}
\end{figure}
Thus, every well-posed port-Hamiltonian system is exactly controllable.
\hfill{\small$\square$}

\section{Example of an exactly controllable port-Hamiltonian system}

An (undamped) vibrating string can be modeled by 
 \begin{align}\label{wave}
    \frac{\partial^2 w}{\partial t^2} (\zeta,t) = \frac{1}{\rho(\zeta)} \frac{\partial }{\partial \zeta} \left( T(\zeta) 
\frac{\partial w}{\partial \zeta}(\zeta,t) \right), 
 \end{align}
 $ t\ge 0$,  $\zeta\in (0,1)$,
 where $\zeta\in [0,1]$ is the spatial variable, $w(\zeta,t)$
  is the vertical position of the string at place $\zeta$ and time $t$, $T (\zeta)>0 $ is the Young's modulus
  of the string, and $\rho (\zeta ) >0$ is the mass density, which may vary along the string. We assume that $T$ and $\rho$ 
are positive and continuously differentiable functions on $[0,1]$. By choosing the state variables 
$x_1= \rho \frac{\partial w}{\partial t}$ (momentum) and $x_2 = \frac{\partial w}{\partial \zeta}$ (strain), the partial 
differential equation can equivalently be written as
\begin{align}
  \nonumber
  \frac{\partial }{\partial t} \begin{bmatrix} x_1(\zeta,t) \\ x_2(\zeta,t) \end{bmatrix} &= \begin{bmatrix} 0 & 1 \\ 1 & 0 
\end{bmatrix} \frac{\partial }{\partial \zeta}\left( \begin{bmatrix} \frac{1}{\rho(\zeta)} & 0 \\ 0 & T(\zeta) 
\end{bmatrix}\begin{bmatrix} x_1(\zeta,t) \\ x_2(\zeta,t) \end{bmatrix} \right) \\
 \label{eq:7.2.3} &= P_1  \frac{\partial }{\partial \zeta}\left( {\cal H}(\zeta)\begin{bmatrix} x_1(\zeta,t) \\ x_2(\zeta,t) 
\end{bmatrix} \right),
\end{align}
where 
\begin{align*} P_1=\begin{bmatrix} 0 & 1\\ 1 & 0 \end{bmatrix}, \quad    {\cal 
H}(\zeta)=\begin{bmatrix}\frac{1}{\rho(\zeta)} & 0\\ 0 & T(\zeta)  \end{bmatrix}.
\end{align*}

 The boundary control for (\ref{eq:7.2.3}) is given by 
\[ \begin{bmatrix} \widetilde W_1 &  \widetilde W_0 \end{bmatrix} \begin{bmatrix}  ({\mathcal H}x)(1,t) \\({\mathcal 
H}x)(0,t) \end{bmatrix} =u(t),\]
where $\begin{bmatrix}\widetilde W_1 & \widetilde W_0 \end{bmatrix}$ is a $2\times 4$-matrix with rank 2,
or equivalently, the partial differential equation is equipped with the boundary control  
\begin{align}\label{wave2}
  \begin{bmatrix}  \widetilde W_1 &  \widetilde W_0 \end{bmatrix}  \begin{bmatrix}   \rho \frac{\partial w}{\partial t} 
(1,t)\\\frac{\partial w}{\partial \zeta} (1,t)\\ \rho \frac{\partial w}{\partial t} (0,t) \\\frac{\partial w}{\partial 
\zeta}(0,t)\end{bmatrix} =u(t).
 \end{align} 
Defining $\gamma =\sqrt{ T(\zeta)/\rho ( \zeta) }$,  the matrix function $P_1 {\cal H}$ can be factorized as
\[    P_1 {\cal H} = \underbrace{\begin{bmatrix} \gamma & -\gamma \\ \rho^{-1} & \rho^{-1} \end{bmatrix}}_{S^{-1}} 
\underbrace{\begin{bmatrix} \gamma & 0 \\ 0 & -\gamma \end{bmatrix}}_\Delta
\underbrace{\begin{bmatrix} (2\gamma)^{-1} & \rho/2 \\  (2\gamma)^{-1} & \rho/2 \end{bmatrix}}_S. \]
In \cite{JacMorZwa15} it is shown that the port-Hamiltonian system \eqref{wave}, \eqref{wave2} is well-posed if and only if 
\[ \widetilde W_1 \begin{bmatrix} \gamma(1) \\ T(1) \end{bmatrix} \oplus  \widetilde W_0 \begin{bmatrix} -\gamma(0) \\ T(0) 
\end{bmatrix} =\mathbb C^2,\]
or equivalently if the vectors $ \widetilde W_1 \left[\begin{smallmatrix} \gamma(1) \\ T(1) \end{smallmatrix}\right]$ and $ 
\widetilde W_0 \left[\begin{smallmatrix} -\gamma(0) \\ T(0) \end{smallmatrix}\right]$ are linearly independent. 

By Theorem \ref{control} the port-Hamiltonian system \eqref{wave}, \eqref{wave2} is exactly controllable if the vectors $ 
\widetilde W_1 \left[\begin{smallmatrix} \gamma(1) \\ T(1) \end{smallmatrix}\right]$ and $ \widetilde W_0 
\left[\begin{smallmatrix} -\gamma(0) \\ T(0) \end{smallmatrix}\right]$ are linearly independent. 
Here we consider $ \widetilde W_1:= I$ and $ \widetilde W_0:= \left[\begin{smallmatrix} -1 & 0\\ 0&1 
\end{smallmatrix}\right]$.
Then the port-Hamiltonian system \eqref{wave}, \eqref{wave2} is exactly controllable  if  the vectors  $ 
\left[\begin{smallmatrix} \gamma(1) \\ T(1) \end{smallmatrix}\right]$ and $ \left[\begin{smallmatrix} \gamma(0) \\ T(0) 
\end{smallmatrix}\right]$ are linearly independent. 

\section{Conclusions}

In this paper we have studied the notion of exact controllability for a class of linear port-Hamiltonian system on a one 
dimensional spacial domain with full boundary control and no internal damping. 
We showed that for this class well-posedness implies exact controllability. Further, we applied the obtained results to the 
wave equation. 
\enlargethispage{-3.2in}

By duality a well-posed  port-Hamiltonian system $(\frak A, \frak B,\frak C)$ with state space $L^2((0,\infty);\mathbb 
C^n)$ 
and output space $\mathbb C^n$ is exactly observable.
An interesting problem for future research  is the characterization of exact 
controllability for port-Hamiltonian systems with internal damping, i.e, port-Hamiltonian systems where $P_0$ is not 
necessarily  skew-adjoint. We note, that the condition that $\widetilde W_B$ has full rank cannot be neglected, as in 
general without full boundary control a port-Hamiltonian system is not exact controllable. Another open question is the 
characterization of exact 
controllability for port-Hamiltonian systems of higher order, see \cite{Villegas_2007}. However, for these systems even the 
characterization of well-posedness is an open problem.

\bibliographystyle{IEEEtran}
\bibliography{literaturabb}

\end{document}